\theoremstyle{plain}
\newtheorem{thm}{Theorem}[section]
\newtheorem{cor}{Corollary}[section]
\newtheorem{prop}{Proposition}[section]
\newtheorem{defn}{Definition}[section]
\newtheorem{ex}[thm]{Example}
\newtheorem{rem}{Remark}[section]
\newcommand{\begintheorem}{\addtocounter{equation}{1}\begin{theorem}}
\newcommand{\beginlemma}{\addtocounter{equation}{1}\begin{lemma}}
\newcommand{\beginproposition}{\addtocounter{equation}{1}\begin{proposition}}
\newcommand{\begindefinition}{\addtocounter{equation}{1}\begin{definition}}
\newcommand{\begincorollary}{\addtocounter{equation}{1}\begin{corollary}}
\begin{document}
\title[ Factorization Theorem through a Dunford-Pettis $ p $-convergent operator]
 {Factorization Theorem through a Dunford-Pettis $ p $-convergent operator }
\author {M. Alikhani.}
\address{Department of Mathematics, University of Isfahan}
\email{m2020alikhani@ yahoo.com}
\subjclass{Primary:46B25 ; Secondary:46E50, 46G05.}
\keywords{Dunford-Pettis  property of order $ p $; Dunford-Pettis $ p $-convergent operators; Right sequentially  continuous}

\begin{abstract} In this article, we introduce the notion of $ p $-$ (DPL) $ sets.\ Also,   a factorization result for  differentiable 
mappings through Dunford-Pettis $ p $-convergent operators is investigated.\ Namely, if $ X ,Y $ are  real Banach spaces and
 $ U $ is an open convex subset of  $ X, $ then
  we obtain that,
given a differentiable mapping $ f : U \rightarrow Y $ its derivative $ f^{\prime} $ takes $ U $-bounded sets
into $ p $-$ (DPL) $ sets if and only if it happens $ f=g\circ S, $ where S is a Dunford-Pettis $ p $-convergent
 operator  from $ X $ into a suitable Banach space $ Z$ and $ g : S(U) \rightarrow Y $  is a G$ \hat{a} $teaux differentiable mapping with some additional properties.
\end{abstract}
\maketitle
\section{Introduction}
Results on factorization through bounded linear operators, polynomials
and holomorphic mappings between Banach spaces have been obtained in
recent years by several authors.\ For instance, M. Gonz\'alez and J. M. Guti\'errez {\rm (\cite[Theorem 16]{gg1})},
proved that a holomorphic mapping $  f : X \rightarrow Y $
between complex Banach spaces is weakly uniformly continuous on bounded subsets
if and only if there exist a Banach space $ Z, $ a compact  operator $ S : X \rightarrow Z, $
and a holomorphic mapping $ g : Z \rightarrow Y $ such that $ f = g \circ S. $\ Recently, Cilia and  Guti\'errez {\rm (\cite[Theorem 2.1]{ce4})},
obtained
a factorization
result for differentiable mappings through compact operators.\ In fact, they
proved that a mapping $ f : X \rightarrow Y $ between real Banach spaces is differentiable
and its derivative $ f^{\prime} $ is a compact mapping with values in the space $ K(X, Y ) $ of
compact operators from $  X$ into $ Y $ if and only if $ f $ may be written in the form
$ f = g \circ S, $ where the intermediate space is normed, $ S $ is a precompact operator,
and $ g $ is a G$ \hat{a} $teaux differentiable mapping with some additional properties.\ 
In the present paper,
we  introduce the notion of $  p$-$ (DPL) $ set  in order to
obtaining a factorization result for a differentiable mapping through
a Dunford-Pettis $ p $-convergent operator.\
\section{ Notions and Definitions }
Throughout this paper $ 1\leq p < \infty $ and $ 1\leq p < q\leq \infty,$ except for the cases where we consider other assumptions.\ Also, we suppose
$ X $ and $ Y$ will denote real Banach spaces, $ U \subseteq X $ will be an  open convex subset,
 $p^{\ast}$ is the H$\ddot{\mathrm{o}}$lder conjugate of $p;$ if $ p=1,~~ \ell_{p^{\ast}} $
plays the role of $ c_{0} .$\ The unit coordinate vector in $ \ell_{p} $ (resp.\ $ c_{0} $ or $\ell_{\infty} $) is denoted by $ e_{n}^{p} $ (resp.\ $ e_{n} $).\ We represent the set of all natural numbers and  the real field by 
 $ \mathbb{N} $ and $ \mathbb{R} $ respectively.\ Given $ x, y \in  X, $ we write $ I(x, y) $ for the
segment with bounds x and y, that is, $ I(x, y)=\lbrace x+\lambda (y-x): 0\leq \lambda \leq 1   \rbrace .$\ For
given an open set $ U \subseteq X,  $ we say that a subset $ B \subset U $ is $ U $-bounded if it is bounded and the distance $ dist(B,  U) $
between B and the boundary $  $ of $  U$ is strictly positive. Clearly, if $ U = X,  $  the $  U$-bounded sets coincide with the bounded
sets.\ A sequence $ (x_{n}) \subset U $ is $ U $-bounded, if the set $ \lbrace x_{n}: n \in  \mathbb{N}\rbrace $ is $ U $-bounded.\ A mapping $ f : U \rightarrow Y $ is compact if it takes $  U$-bounded subsets of $ U $ into relatively compact subsets of $ Y. $\
 For given a mapping $ f : U \rightarrow Y $ and a class $  \mathcal{M}$ of subsets of $ U $ such that every
singleton belongs to $  \mathcal{M},$ the mapping $  f$ is $  \mathcal{M}$-differentiable at $ x \in U, $ if there exists
an operator $ f^{\prime}(x) \in L(X,Y)$ such that
$$\lim_{\varepsilon\rightarrow 0}\frac{f(x+\varepsilon y)-f(x)-f^{\prime}(x)(\varepsilon y)}{\varepsilon} =0 $$
uniformly with respect to y on each member of $ \mathcal{M} .$\ In this case, we write
$ f\in D_{\mathcal{M}}(x, Y) .$\
We say that $ f $ is G$ \hat{a} $teaux differentiable at $  x$ if
$ f\in D_{\mathcal{M}}(x, Y) $ where  $ \mathcal{M} $ is the class of all single-point subsets of $ X. $\ We also, say that $ f $ is Fr\'echet differentiable at $ x $ if $ f\in D_{\mathcal{M}}(x, Y) ,$  where $ \mathcal{M} $ is the
class of all bounded subsets of $ X$ \cite{gg1}.\\
The word ` an operator' will always mean a bounded linear operator.\ For any Banach space $ X, $ the dual space of bounded
linear functionals on $  X$ will be denoted by $ X^{\ast} .$\
 We denote the closed unit ball of $ X$ by $ B_{X} $ and the identity
operator on $ X$ is denoted by $ id_{X}. $\ For a bounded linear operator $ T : X \rightarrow Y, $ the adjoint of the operator $ T $
is denoted by $ T^{\ast}. $\ The space of all bounded linear operators, weakly compact operators from $  X$ to $  Y$ will be denoted by $ L(X,Y ) $ and
$ W(X,Y ) $ respectively. \
 We denote $ C^{k} (X)$ the space of all
real-valued k-times continuously  differentiable functions on $ X. $\\ 
Recall that a bounded linear operator is completely
continuous, if it takes weakly convergent sequences into (norm) convergent sequences.\ The subspace of all such operators
is denoted by $ CC(X, Y ).$\
Let us recall from \cite{djt} that, a sequence $ (x_{n})_{n} $ in $ X $ is said to be weakly $ p $-summable, if $ (x^{\ast}(x_{n}))_{n} \in \ell_{p}$ for each $ x^{\ast}\in X^{\ast}.$\ Note that, a sequence $(x_{n})_{n}$ in $ X $ is said to be weakly $ p $-convergent to $ x\in X,$ if $ (x_{n} - x)_{n}$ is a weakly $ p $-summable sequence in $ X. $\
Castillo \cite{C1}, defined the ideal of $  p$-converging operators,
as those bounded linear operators transforming weakly $ p $-summable sequences into norm
null sequences.\ The class of $ p $-convergent operators from $ X $ into $ Y$  is denoted by $ C_{p}(X,Y) .$\ A Banach space $  X$ has the $ p $-Schur property (in
short $ id_{X}\in (C_{p}) $), if every weakly $  p$-summable sequence in $  X$ is norm null.
It is clear that, $ X $ has the $ \infty $-Schur property if and only if every weakly
null sequence in $  X$ is norm null.\ Hence, the $ \infty $-Schur property coincides with
the Schur property.\ Recall that, a Banach space $ X$ has the Dunford-Pettis property of order $p$ ($X \in (DPP_{p})$), if every weakly compact operator on
$ X $ is $ p $-convergent \cite{cs}.\ 
A bounded
subset $ K$ of $ X $ is  Dunford-Pettis, if
every weakly null sequence $(x^{\ast}_{n})_n $ in $ X^{\ast}, $ converges uniformly to zero on the set $K$ \cite{An}.\\ Throughout this paper,
inspired by Right null and Right Cauchy sequences \cite{ce1, g8}, for convenience
we apply the notions $ p$-Right null and $ p $-Right Cauchy sequences instead of weakly $ p $-summable and weakly $ p $-Cauchy sequences which are Dunford-Pettis sets, respectively.\
The space of all finite regular Borel signed measures on the compact space $ \Omega $ is denoted by $ M(\Omega)=C(\Omega)^{\ast} .$\ 
We refer the reader for undefined terminologies to the
classical references \cite{AlbKal, di1}.
\section{Main results}
In this section, we introduce the notions of $  p$-$ (DPL) $ sets and  the mapping $ p $-Right sequentially continuous.\
Then, we obtain a  factorization result for a differentiable mapping through
a Dunford-Pettis $ p $-convergent operator.\
\begin{defn}\label{d1}
Let $ K \subset L(X, Y ). $\ We say that $ K $ is a $ p $-$ (DPL) $ set, if for every
$ p $-Right null sequence $ (x_{n})_{n} $ in $ X, $ it follows
$$ \lim _{n} \sup_{T \in K} \Vert T(x_{n})\Vert =0.$$
\end{defn}
 Note that  the
$ p $-$ (DPL) $  subsets of $ X^{\ast} $ are more often called $ p $-Right sets introduced by  Ghenciu \cite{g9}.\\
Recall that, an operator $T: X\rightarrow Y $
is said to be Dunford-Pettis $ p$-convergent, if it takes Dunford-Pettis weakly $ p $-summable sequences to norm null sequences.\ The class of Dunford-Pettis $ p $-convergent operators from $ X $ into $ Y $ is denoted by $ DPC_{p}(X,Y) $ \cite{g9}.\\ The following Proposition gives some additional properties of $ p $-$ (DPL) $ sets.\
\begin{prop}\label{p1}
$ \rm{(i)} $ If $ K \subset DPC_{p}(X, Y ) $ is a relatively compact set, then it is a $ p $-$ (DPL) $ set
in $ L(X, Y ). $\\
$ \rm{(ii)} $ Absolutely closed convex hull of a $ p $-$ (DPL) $ set in $ L(X, Y ) $ is $ p $-$ (DPL). $\\
$ \rm{(iii)} $
Every relatively weakly compact subset of $ X^{\ast} $ is a $ p$-$ (DPL) $ set in $ X^{\ast} .$\\
$ \rm{(iv)} $ $ T \in DPC_{p}(X,Y) $ if and only if $ T^{\ast}(B_{Y^{\ast}})$ is a $ p $-$ (DPL) $ set in $ X^{\ast}. $\\
$ \rm{(v)} $ If $1\leq p < q \leq \infty,  $ then every $ q $-$ (DPL) $ subset
of $ X^{\ast}$ is a $ p $-$ (DPL) $ subset of $ X^{\ast} ,$ but the
converse of this assertion is not correct.\\
$ \rm{(vi)} $  If $K \subset L(X, Y ) $ is a $ p$-$ (DPL)$ set, then every $ T\in K $ is a Dunford-Pettis $ p $-convergent operator.\\
$ \rm{(vii)} $ If $  \Omega$ is a compact Hausdorff space, then every $ p $-$ (DPL) $ set in $ M(\Omega) $ is $ q $-$ (DPL) .$
\end{prop}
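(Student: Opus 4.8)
The plan is to treat the seven assertions in increasing order of difficulty, reducing each to the defining property that $K\subset L(X,Y)$ is $p$-$(DPL)$ exactly when $\sup_{T\in K}\|T(x_n)\|\to0$ for every $p$-Right null sequence $(x_n)$, i.e.\ every weakly $p$-summable Dunford--Pettis sequence. Assertion (vi) is immediate: for a single $T\in K$ one has $\|T(x_n)\|\le\sup_{S\in K}\|S(x_n)\|\to0$, so $T\in DPC_p(X,Y)$. Assertion (iv) is the computation $\sup_{y^{\ast}\in B_{Y^{\ast}}}|\langle T^{\ast}y^{\ast},x_n\rangle|=\sup_{\|y^{\ast}\|\le1}|y^{\ast}(Tx_n)|=\|Tx_n\|$, so that $T^{\ast}(B_{Y^{\ast}})$ is $p$-$(DPL)$ in $X^{\ast}$ iff $\|Tx_n\|\to0$ for every $p$-Right null $(x_n)$, which is precisely $T\in DPC_p(X,Y)$. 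For (i) I would use that a weakly $p$-summable sequence is weakly null, hence norm bounded by some $M$; given $\varepsilon>0$, cover the relatively compact $K$ by finitely many balls centred at $T_1,\dots,T_m\in K$ of radius $\varepsilon/2M$, and combine the estimate $\|T(x_n)\|\le\|T-T_i\|M+\|T_i(x_n)\|$ with $\|T_i(x_n)\|\to0$ (each $T_i\in DPC_p$). For (ii) note that $T\mapsto\|T(x_n)\|$ is $\|x_n\|$-Lipschitz on $L(X,Y)$, so its supremum over the closed absolutely convex hull equals the supremum over the hull, and for $T=\sum_i\lambda_iT_i$ with $\sum_i|\lambda_i|\le1$ one has $\|T(x_n)\|\le\sup_{S\in K}\|S(x_n)\|$; letting $n\to\infty$ gives the claim.

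For (iii) I would argue by contradiction: if a relatively weakly compact $K\subset X^{\ast}$ were not $p$-$(DPL)$, choose a $p$-Right null $(x_n)$, an $\varepsilon>0$ and $x_n^{\ast}\in K$ with $|x_n^{\ast}(x_n)|\ge\varepsilon$. By Eberlein--\v Smulian a subsequence $x_{n_j}^{\ast}\to x^{\ast}$ weakly, so $(x_{n_j}^{\ast}-x^{\ast})$ is weakly null in $X^{\ast}$; since $\{x_n\}$ is a Dunford--Pettis set, $(x_{n_j}^{\ast}-x^{\ast})(x_{n_j})\to0$, while $x^{\ast}(x_{n_j})\to0$ because $(x_n)$ is weakly null, yielding $x_{n_j}^{\ast}(x_{n_j})\to0$, a contradiction. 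The forward half of (v) is the inclusion $\ell_p\hookrightarrow\ell_q$: weak $p$-summability implies weak $q$-summability, so every $p$-Right null sequence is $q$-Right null, whence a $q$-$(DPL)$ set (which controls the larger class) is automatically $p$-$(DPL)$.

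The converse of (v) requires a genuine example rather than a formal argument. I would take $X=L_1[0,1]$, $p=1$, $q=2$, and the set $A=B_{L_\infty}=\mathrm{id}^{\ast}(B_{L_\infty})\subset L_1^{\ast}$. On one hand $A$ is $1$-$(DPL)$: every weakly $1$-summable sequence in $L_1$ is the term sequence of a WUC series, and since $L_1$ is weakly sequentially complete it contains no copy of $c_0$, so by Bessaga--Pe\l czy\'nski every WUC series converges unconditionally and its terms are norm null; thus every $1$-Right null sequence is norm null, $\mathrm{id}_{L_1}\in DPC_1$, and $A$ is $1$-$(DPL)$ by (iv). On the other hand the Rademacher sequence $(r_n)$ is weakly null and, by Khintchine's inequality, equivalent to the $\ell_2$-basis, hence weakly $2$-summable; being weakly null in the space $L_1$, which has the Dunford--Pettis property, it is relatively weakly compact and therefore a Dunford--Pettis set, so $(r_n)$ is $2$-Right null. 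Since $\sup_{g\in A}|\langle r_n,g\rangle|=\|r_n\|_1=1\not\to0$, the set $A$ is not $2$-$(DPL)$.

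Assertion (vii) is the subtlest point, and I expect it to be the main obstacle. The plan is to show that in $M(\Omega)=C(\Omega)^{\ast}$ a $p$-$(DPL)$ set is already relatively weakly compact and then invoke (iii). First, by (v) it suffices to treat $p=1$, since every $p$-$(DPL)$ set is $1$-$(DPL)$. The structural facts I would use are that $C(\Omega)$ has the Dunford--Pettis property and Pe\l czy\'nski's property (V). Because $C(\Omega)$ has the Dunford--Pettis property, every weakly null sequence is relatively weakly compact and hence a Dunford--Pettis set; consequently the Dunford--Pettis requirement is automatic for weakly null sequences, so a $1$-Right null sequence in $C(\Omega)$ is exactly the term sequence of a WUC series. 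Thus a $1$-$(DPL)$ subset of $M(\Omega)$ coincides with a Pe\l czy\'nski (V)-set, and property (V) asserts precisely that every such set is relatively weakly compact; applying (iii) it is $q$-$(DPL)$. The crux is getting this dictionary right---identifying the $1$-Right null sequences of $C(\Omega)$ with WUC terms via the Dunford--Pettis property so that the definition of a (V)-set is matched---and then combining the reduction to $p=1$ (through (v)) with property (V) of $C(\Omega)$.
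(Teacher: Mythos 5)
The paper states this proposition without any accompanying proof (the source simply skips from the statement to Remark 3.1), so there is no ``paper's approach'' to compare against; judged on its own, your proposal is essentially correct. Parts (i), (ii), (iv), (vi) are exactly the routine estimates you give; your (iii) via Eberlein--\v{S}mulian plus the defining property of a Dunford--Pettis set is the standard argument; and your route for (vii) --- reduce to $p=1$ by (v), use the Dunford--Pettis property of $C(\Omega)$ to see that every weakly $1$-summable sequence in $C(\Omega)$ is automatically a Dunford--Pettis set, so that $1$-$(DPL)$ subsets of $M(\Omega)$ are precisely Pe\l czy\'nski $(V)$-sets, then invoke property $(V)$ of $C(\Omega)$ to get relative weak compactness and close the loop with (iii) --- is the natural one and is consistent with the author's framework, in which $p$-$(DPL)$ subsets of $X^{\ast}$ are the $p$-Right sets of \cite{g9}.

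The one clause you should tighten is the counterexample in (v). Your example is sound as far as it goes: Bessel's inequality gives weak $2$-summability of the Rademachers, the Dunford--Pettis property of $L_1$ makes $\{r_n\}$ a Dunford--Pettis set, and $\sup_{g\in B_{L_\infty}}|\langle r_n,g\rangle|=\|r_n\|_{1}=1$, while the Bessaga--Pe\l czy\'nski argument shows $B_{L_\infty}$ is $1$-$(DPL)$. But this separates only the pairs with $p<2\le q$ (indeed $B_{L_\infty}$ fails to be $q$-$(DPL)$ for every $q\ge 2$ and is $p$-$(DPL)$ for every $p<2$, since operators from $\ell_{p^{\ast}}$ into $L_1$ are compact when $p^{\ast}>2$). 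Under the paper's standing convention the statement is quantified over all $1\le p<q\le\infty$, so if the non-converse claim is read pairwise your proof is incomplete for $2\le p<q$; you should either note explicitly that a single counterexample suffices to refute the converse as a general implication, or supply examples covering the remaining pairs.
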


\begin{rem}\label{r1}
$ \rm{(i)} $ It is clear that every
$ q $-$ (DPL) $ subset of $L(X,Y) $ is 
$ p $-$ (DPL), $ whenever $1\leq p < q \leq \infty. $\ Also,
it is interesting to obtain conditions under which every $ p $-$ (DPL) $ set in
the space $ L(X,Y) $ is $ q $-$ (DPL). $\ In my opinion, this is a very
interesting but, difficult question.\ In particular if $ K\subset X^{\ast} ,$ we answer  this question.\ Indeed,  
we obtain a characterization for those Banach spaces in which $ p $-$ (DPL) $
 sets are  $ q $-$ (DPL) $ (see Definition $ \rm{4.1} $ and  Theorem $ \rm{4.4} $  in \cite{ma}).\\
$ \rm{(ii)} $
 Proposition $ \rm{\ref{p1}} $ assertion $ \rm{(iii)} $  implies that every relatively weakly compact
subset of a topological dual Banach space is $ p $-$ (DPL), $ while the converse of this implication is false.\ For instance, the unit ball of $ \ell_{\infty} $ is a $ p$-$ (DPL) $ set, but it is
not weakly compact.\\
$ \rm{(iii)} $ There is a relatively weakly compact set in $ K(c_{0},c_{0})$ so that is not a $ p$-$ (DPL) $ set.\ In fact, consider
the operator $ T:\ell_{2}\rightarrow K(c_{0},c_{0})$
given by $ T(\alpha)(x) =(\alpha_{n}x_{n}), ~~~~ \alpha=(\alpha_{n})\in \ell_{2}, ~~~~ x=(x_{n}) \in c_{0}.$\ It is clear that $ T(B_{\ell_{2}}) $ is relatively
weakly compact, since $ T(e^{2}_{n})(e_{n}) =e_{n}.$\ But it is not a $ p$-$ (DPL) $ set in $ K(c_{0} ,c_{0}). $
\end{rem}
 
\begin{defn}\label{d2}
We say that  the mapping $ f :U\rightarrow Y$ is $ p $-Right sequentially continuous or $ \tau_{r} $ sequentially continuous of order $ p,$ if it takes $ p $-Right Cauchy $ U $-bounded
sequences of $ U $ into norm convergent sequences in $ Y. $
\end{defn}
Note that the mapping $ f :U\rightarrow Y$ is $ \infty $-Right sequentially continuous or Right sequentially continuous, if it takes Right Cauchy and $ U $-bounded
sequences of $ U $ into  norm convergent sequences in $ Y. $
\begin{rem}\label{r2}
If $ 1\leq p<q\leq \infty, $ then the $ q $-Right sequentially continuous maps are precisely the $ p $-Right sequentially continuous.\ 
We do not have any example of a mapping $ p $-Right sequentially continuous
  which is not $ q $-Right sequentially continuous.\ Hence,
it would be interesting to obtain conditions under which every $ p $-Right sequentially continuous map is  $ q $-Right sequentially continuous.\ In my opinion, this is a very
interesting, but its difficult question?

\end{rem}
\begin{prop}\label{p2}
Let $ U \subseteq X $ be an open convex subset, and let $ f : U \rightarrow Y $ be  a
differentiable mapping such that $ f^{^{\prime}} : U \rightarrow DPC_{p}(X, Y ) $ is $ \tau_{r}$-sequentially continuous on
$ U $-bounded sets.\ Then, $ f^{\prime} $ takes Dunford-Pettis and $ U$-bounded sets into $ p $-$ (DPL) $ sets.
\end{prop}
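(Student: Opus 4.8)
The plan is to reduce the statement to Proposition~\ref{p1}\,(i). That part says that a relatively compact subset of $ DPC_{p}(X,Y) $ is automatically a $ p $-$(DPL)$ set, so it suffices to prove the following: whenever $ B\subseteq U $ is Dunford-Pettis and $ U $-bounded, the image $ f^{\prime}(B) $ is relatively compact in $ DPC_{p}(X,Y) $ for the operator norm. Note that $ f^{\prime}(B)\subseteq DPC_{p}(X,Y) $ already by hypothesis, and that $ DPC_{p}(X,Y) $ is norm-closed in $ L(X,Y) $ (if $ T_{k}\to T $ in norm with $ T_{k}\in DPC_{p} $, then for a $ p $-Right null $ (z_{n})_{n} $ one splits $ \Vert T z_{n}\Vert\leq\Vert T-T_{k}\Vert\,\Vert z_{n}\Vert+\Vert T_{k}z_{n}\Vert $), so the limits of convergent sequences from $ f^{\prime}(B) $ stay in $ DPC_{p}(X,Y) $.

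To establish relative compactness, I would start from an arbitrary sequence $ (x_{n})_{n}\subseteq B $ and aim to extract a subsequence along which $ f^{\prime} $ converges in operator norm. First, since $ B $ is a Dunford-Pettis set and every subset of a Dunford-Pettis set is again Dunford-Pettis, each subsequence of $ (x_{n})_{n} $ is a Dunford-Pettis set; hence, to produce a $ p $-Right Cauchy subsequence it remains only to extract a weakly $ p $-Cauchy one. Granting such a subsequence $ (x_{n_{k}})_{k} $, it is $ p $-Right Cauchy and $ U $-bounded, so the $ \tau_{r} $-sequential continuity of $ f^{\prime} $ on $ U $-bounded sets (Definition~\ref{d2}) yields an operator $ T\in DPC_{p}(X,Y) $ with $ \Vert f^{\prime}(x_{n_{k}})-T\Vert\to 0 $. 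Thus every sequence in $ f^{\prime}(B) $ has a norm-convergent subsequence, $ f^{\prime}(B) $ is relatively compact, and Proposition~\ref{p1}\,(i) then gives that $ f^{\prime}(B) $ is a $ p $-$(DPL)$ set.

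If one prefers to avoid citing Proposition~\ref{p1}\,(i) and argue directly against Definition~\ref{d1}, the same extraction does the job: assuming $ f^{\prime}(B) $ is not $ p $-$(DPL)$, one obtains $ \varepsilon>0 $, a $ p $-Right null sequence $ (y_{n})_{n} $ and points $ x_{n}\in B $ with $ \Vert f^{\prime}(x_{n})(y_{n})\Vert\geq\varepsilon $ (a relabelled subsequence of $ (y_{n})_{n} $ is again $ p $-Right null, being a subsequence of a weakly $ p $-summable sequence sitting inside a Dunford-Pettis set). Extracting $ T $ as above along a $ p $-Right Cauchy subsequence $ (x_{n_{k}})_{k} $, one estimates $ \Vert f^{\prime}(x_{n_{k}})(y_{n_{k}})\Vert\leq\Vert f^{\prime}(x_{n_{k}})-T\Vert\,\sup_{j}\Vert y_{j}\Vert+\Vert T(y_{n_{k}})\Vert $. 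Here $ \sup_{j}\Vert y_{j}\Vert<\infty $ (weakly $ p $-summable sequences are bounded), the first term tends to $ 0 $ by norm convergence, and $ \Vert T(y_{n_{k}})\Vert\to 0 $ because $ T\in DPC_{p}(X,Y) $ and $ (y_{n_{k}})_{k} $ is $ p $-Right null; this contradicts the lower bound $ \varepsilon $.

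The hard part will be the single non-formal step, namely extracting a weakly $ p $-Cauchy subsequence from an arbitrary sequence lying in the Dunford-Pettis set $ B $. Everything else is soft: the Dunford-Pettis property of subsets, the closedness of $ DPC_{p}(X,Y) $, and the two routine estimates above. The whole argument therefore hinges on a Rosenthal-type selection principle for weakly $ p $-Cauchy sequences, and it is exactly here that the Dunford-Pettis hypothesis on $ B $ (rather than mere $ U $-boundedness) has to be used in full; I would isolate this selection as the key lemma and verify that sequences inside a Dunford-Pettis set cannot carry the obstruction (an $ \ell_{1} $-type, respectively $ \ell_{p^{\ast}} $-type, subsequence) that would prevent the extraction.
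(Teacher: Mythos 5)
Your overall architecture is exactly the paper's: reduce to showing that $f^{\prime}(B)$ is relatively compact in $DPC_{p}(X,Y)$ via a subsequence extraction inside the Dunford--Pettis set $B$, and then invoke Proposition \ref{p1}(i). The paper's proof is precisely this, in three lines: a $U$-bounded Dunford--Pettis set is a Rosenthal set (citing \cite[Corollary 17]{g12}), so every sequence in $K$ has a weakly Cauchy subsequence, which (lying in a Dunford--Pettis set) is Right Cauchy; the $\tau_{r}$-sequential continuity of $f^{\prime}$ then makes $f^{\prime}(K)$ relatively compact, and Proposition \ref{p1}(i) finishes.

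The genuine problem is the form in which you state your ``key lemma.'' You ask for a \emph{weakly $p$-Cauchy} subsequence of an arbitrary sequence in a Dunford--Pettis set, and for finite $p$ this selection principle is false. Take $X=c_{0}$: since $\ell_{1}$ has the Schur property, \emph{every} bounded subset of $c_{0}$ is Dunford--Pettis, in particular the summing basis $s_{n}=\sum_{i\leq n}e_{i}$ lies in one; but no subsequence $(s_{k_{n}})$ is weakly $p$-Cauchy for finite $p$, because choosing $a\in\ell_{1}$ with $a_{k_{r}}\sim \bigl(r(\log r)^{2}\bigr)^{-1}$ and suitable increasing index pairs one gets $\sum_{n}\vert a(s_{k_{m_{n}}}-s_{k_{j_{n}}})\vert^{p}\gtrsim\sum_{n}(\log n)^{-p}=\infty$. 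What is actually available --- and what the cited Corollary 17 of \cite{g12} delivers --- is only a \emph{weakly Cauchy} subsequence, i.e.\ a Right Cauchy (order $\infty$) one. This suffices because the hypothesis ``$\tau_{r}$-sequentially continuous'' is to be read as Right sequential continuity (the order-$\infty$ notion of Definition \ref{d2}, which per Remark \ref{r2} is the strongest in the scale), so it already applies to weakly Cauchy sequences sitting in a Dunford--Pettis set. Replace ``weakly $p$-Cauchy'' by ``weakly Cauchy'' in your extraction step and quote the Rosenthal-set property of Dunford--Pettis sets, and your argument closes; as written, the step you flag as the hard part is not merely hard but unattainable. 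The rest of your proposal (norm-closedness of $DPC_{p}(X,Y)$, the direct $\varepsilon$-argument avoiding Proposition \ref{p1}(i)) is sound but inessential.
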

\begin{proof}
Let $ K $ be a $ U$-bounded and Dunford-Pettis set.\ It is well known that, $ K$ is a Rosenthal
set (see,{\rm (\cite[Corollary 17]{g12})}).\ So, by the hypothesis, $ f^{\prime} (K)$ is relatively compact in $ DPC_{p}(X,Y). $\ Hence,
by the part $ \rm{(i)} $ of Proposition \ref{p1}, $ f^{\prime} (K)$ is a $ p $-$ (DPL) $ set.
\end{proof}

\begin{thm}\label{t1}
Let $ U \subseteq X $ be an open convex subset, and let $ f : U \rightarrow Y $ be a
differentiable mapping such that for every $ U $-bounded Dunford-Pettis set $ K, f^{\prime}(K) $ is a $ p $-$ (DPL) $ set in $ L(X, Y ). $\ Then, $ f$ is $ p $-Right sequentially continuous.
\end{thm}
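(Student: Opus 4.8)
The plan is to verify Definition \ref{d2} directly: I will show that $f$ sends every $p$-Right Cauchy $U$-bounded sequence to a norm-Cauchy (hence convergent) sequence in $Y$. So let $(x_{n})_{n}$ be $p$-Right Cauchy and $U$-bounded; by definition it is weakly $p$-Cauchy, a Dunford--Pettis set, and $U$-bounded. The first move is to pass to the convex structure: set $K=\overline{\mathrm{co}}\{x_{n}: n\in\mathbb{N}\}$ and check that $K$ is again a $U$-bounded Dunford--Pettis set. For $U$-boundedness, let $\delta=\mathrm{dist}(\{x_{n}\}, X\setminus U)>0$; the set $U_{\delta}=\{x\in U:\mathrm{dist}(x,X\setminus U)\ge\delta\}$ is closed and convex (if $B(x,\delta)\subseteq U$ and $B(y,\delta)\subseteq U$ then, by convexity of $U$, $B(\lambda x+(1-\lambda)y,\delta)\subseteq U$), so it contains $\overline{\mathrm{co}}\{x_{n}\}=K$, giving $\mathrm{dist}(K,X\setminus U)\ge\delta>0$. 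The Dunford--Pettis property is preserved because for any weakly null $(x^{\ast}_{k})\subset X^{\ast}$ one has $\sup_{x\in K}|x^{\ast}_{k}(x)|=\sup_{n}|x^{\ast}_{k}(x_{n})|\to 0$. Thus the hypothesis applies to $K$ and yields that $f^{\prime}(K)$ is a $p$-$(DPL)$ set in $L(X,Y)$.

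The analytic input is the sharp mean value inequality. Since $U$ is convex, for any $m,n$ the segment $I(x_{m},x_{n})$ lies in $U$, and applying differentiability of $f$ to $t\mapsto f(x_{m}+t(x_{n}-x_{m}))$ on $[0,1]$ gives
$$\Vert f(x_{n})-f(x_{m})\Vert\le \sup_{z\in I(x_{m},x_{n})}\Vert f^{\prime}(z)(x_{n}-x_{m})\Vert.$$
Because $I(x_{m},x_{n})\subset K$, every operator $f^{\prime}(z)$ occurring on the right belongs to $f^{\prime}(K)$, so the right-hand side is dominated by $\sup_{T\in f^{\prime}(K)}\Vert T(x_{n}-x_{m})\Vert$.

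Next I would argue by contradiction. If $(f(x_{n}))_{n}$ fails to be norm-Cauchy, choose $\varepsilon>0$ together with strictly increasing sequences $(n_{k})$ and $(m_{k})$, $n_{k}\neq m_{k}$, such that $\Vert f(x_{n_{k}})-f(x_{m_{k}})\Vert\ge\varepsilon$ for all $k$ (the bookkeeping: having chosen indices below some bound, the failure of Cauchyness supplies a new pair exceeding that bound). Put $y_{k}=x_{n_{k}}-x_{m_{k}}$. Since $(x_{n})$ is weakly $p$-Cauchy, $(y_{k})_{k}$ is weakly $p$-summable; and since the difference set of a Dunford--Pettis set is Dunford--Pettis (as $\sup_{a,b}|x^{\ast}_{k}(a-b)|\le\sup_{a}|x^{\ast}_{k}(a)|+\sup_{b}|x^{\ast}_{k}(b)|$), $(y_{k})_{k}$ is a Dunford--Pettis set. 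Hence $(y_{k})_{k}$ is $p$-Right null. The $p$-$(DPL)$ property of $f^{\prime}(K)$ then forces $\sup_{T\in f^{\prime}(K)}\Vert T(y_{k})\Vert\to 0$, while the mean value estimate gives $\varepsilon\le\Vert f(x_{n_{k}})-f(x_{m_{k}})\Vert\le\sup_{T\in f^{\prime}(K)}\Vert T(y_{k})\Vert$ for every $k$, a contradiction. Therefore $(f(x_{n}))_{n}$ is Cauchy, hence convergent in the Banach space $Y$, and $f$ is $p$-Right sequentially continuous.

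The step I expect to require the most care is confirming that $(y_{k})_{k}$ is genuinely $p$-Right null, namely matching the definition of weakly $p$-Cauchy (which guarantees that the differences along strictly increasing index sequences are weakly $p$-summable) with the Dunford--Pettis condition on those differences. The passage to the convex hull and the mean value inequality are standard, but the former must be verified to preserve both $U$-boundedness and the Dunford--Pettis property, since that is exactly what licenses the use of the hypothesis on $f^{\prime}(K)$.
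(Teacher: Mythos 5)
Your proof is correct and follows essentially the same route as the paper's: a mean value inequality over the segments $I(x_m,x_n)$, application of the hypothesis to a $U$-bounded Dunford--Pettis set containing the intermediate points (you use the closed convex hull, the paper uses the set of mean-value points inside that hull), and the observation that differences along increasing index sequences are $p$-Right null. Your version is in fact a more carefully justified rendering — the subsequence/contradiction argument makes rigorous the paper's informal double limit $\lim_{n,m}\sup_{i,j}\|f'(c_{i,j})(x_n-x_m)\|=0$, and you verify explicitly that the convex hull remains $U$-bounded and Dunford--Pettis, which the paper merely asserts.
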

\begin{proof}
Let $ (x_{n})_{n} $ be a $ U $-bounded $ p $-Right Cauchy sequence.\ By the Mean Value Theorem, for all $ n,m\in \mathbb{N} ,$ there is
$ c_{i,j} \in I(x_{n},x_{m})$ such that
$$ \parallel f(x_{n}) -f(x_{m}) \parallel \leq \parallel f^{\prime}(c_{i,j})(x_{n}-x_{m}) \parallel \leq \sup_{i,j} \parallel f^{\prime}(c_{i,j})(x_{n}-x_{m}) \parallel$$
It is clear that, the set $ K:=\lbrace c_{i,j}:i,j \in \mathbb{N} \rbrace $ is contained in the convex hull of all $ x_{n} $ and then in
$ U, $ since $ U $ is a convex set.\ Moreover $ K$ is still a Dunford-Pettis and $ U $-bounded set.
By the hypothesis, $ f^{\prime}(K) $ is a $ p $-$ (DPL) $ set in $ L(X,Y) $.\ Since $ (x_{n}-x_{m}) $ is a $ p $-Right null
sequence, it follows that
$ \displaystyle\lim_{_{n,m}}\sup_{i,j}\parallel f^{\prime}(c_{i,j})(x_{n}-x_{m}) \parallel =0. $
Therefore, $ \Vert f(x_{n})-f(x_{m}) \Vert\rightarrow 0. $
\end{proof}
\begin{ex}\label{e1}
Let $ h\in C^{1}(\mathbb{R}) .$\ Define $ f:c_{0}\rightarrow \mathbb{R} $ by 
$ f((x_{n})_{n}) = \displaystyle\sum _{n=1}^{\infty}\frac{h(x_{n})}{2^{n}}.$\
By using the same argument as in the {\rm (\cite[Example 2.4]{ce4})}, one can show that $ f $ is differentiable such that  
$ f^{\prime} ((x_{n})_{n})=(\frac{h^{\prime}(x_{n})}{2^{n}})_{n}\in \ell_{1}.$\  It is easy to verify that
 $ f^{\prime}:c_{0} \rightarrow L(c_{0},\mathbb{R})=DPC_{p}(c_{0},\mathbb{R})$ is compact.\ So, $ f^{\prime}(B_{c_{0}}) $ is a relatively compact set in $ DPC_{p} (c_{0},\mathbb{R}).$\ Hence  the part $ \rm{(i)} $ of Proposition $ \rm{\ref{p1}} $, implies that $ f^{\prime}(B_{c_{0}}) $ is a $ p $-$ (DPL) $ set.\ 
Now, let $ K $ be an arbitrary $ U $-bounded and Dunford-Pettis set in $ B_{c_{0}}. $\  Clearly,
 $ f^{\prime}(K)$ is a $ p $-$ (DPL) $ set in $ L(c_{0}, \mathbb{R}) .$\ Hence, 
the Theorem $ \rm{ \ref{t1}} $, shows that $ f$ is $ p $-Right sequentially continuous.
\end{ex}
Let us recall from \cite{ccl1}, that a bounded subset $ K $ of $ X $ is a $ p $-$ (V^{\ast} ) $ set, if $ \displaystyle \lim_{n\rightarrow\infty}\displaystyle \sup_{x\in K}\vert x^{\ast}_{n}(x) \vert =0,$
for every weakly $ p $-summable sequence $ (x^{\ast}_{n})_{n} $ in $ X^{\ast}.$

\begin{prop}\label{p3}
Suppose that $ 2<p \leq \infty$ and $ K \subset L(X, Y ) $ is a $ p $-$ (DPL) $ set.\ If $ S \in L(G,X) $ is a bounded linear
operator with $ p $-convergent adjoint, then the set
$ \lbrace S^{\ast}\circ T^{\ast}(B_{Y^{\ast}}) : T\in K \rbrace$
is relatively compact in $ G^{\ast}. $
\end{prop}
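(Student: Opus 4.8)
The plan is to peel off two purely formal reductions and then reduce the whole statement to a single total-boundedness assertion about the interaction of a $p$-$(V^{\ast})$ set with a $p$-$(DPL)$ set, which I would settle by a Rosenthal-type extraction.

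First I would unwind the set in question. Writing $H:=\bigcup_{T\in K}T^{\ast}(B_{Y^{\ast}})\subseteq X^{\ast}$, the set under consideration is exactly $S^{\ast}(H)$. Two observations organize everything. (i) $H$ is a $p$-$(DPL)$ set in $X^{\ast}$: for any $p$-Right null sequence $(x_{n})_{n}$ in $X$ we have $\sup_{h\in H}|h(x_{n})|=\sup_{T\in K}\sup_{y^{\ast}\in B_{Y^{\ast}}}|y^{\ast}(Tx_{n})|=\sup_{T\in K}\|Tx_{n}\|$, which tends to $0$ precisely because $K$ is a $p$-$(DPL)$ set in $L(X,Y)$ (Definition \ref{d1}). (ii) For $h\in X^{\ast}$ one has $\|S^{\ast}h\|_{G^{\ast}}=\sup_{g\in B_{G}}|h(Sg)|=\sup_{x\in W}|h(x)|$, where $W:=S(B_{G})\subseteq X$; moreover, after the same computation, the hypothesis that $S^{\ast}$ is $p$-convergent is exactly the statement that $W$ is a $p$-$(V^{\ast})$ set in $X$, since $\|S^{\ast}f_{n}\|_{G^{\ast}}=\sup_{x\in W}|f_{n}(x)|$ for every weakly $p$-summable $(f_{n})_{n}\subset X^{\ast}$ (cf.\ the definition recalled from \cite{ccl1}).

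Relative compactness of $S^{\ast}(H)$ in $G^{\ast}$ is equivalent to total boundedness, so I would argue by contradiction. If $S^{\ast}(H)$ were not totally bounded there would be $\varepsilon>0$ and a sequence $(h_{n})_{n}\subset H$ with $\|S^{\ast}h_{n}-S^{\ast}h_{m}\|\ge\varepsilon$ for all $n\neq m$. By observation (ii) this reads $\sup_{x\in W}|(h_{n}-h_{m})(x)|\ge\varepsilon$, so for each pair $n\neq m$ I may pick a witness $x_{n,m}\in W$ with $|(h_{n}-h_{m})(x_{n,m})|\ge\varepsilon/2$. The decisive step is then to manufacture, from the differences and the witnesses, one of two forbidden objects. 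Using Rosenthal's $\ell_{1}$ theorem in $X^{\ast}$ together with a diagonal/gliding-hump selection, I would pass to subsequences and produce either a weakly $p$-summable sequence of differences $d_{k}:=h_{m_{k}}-h_{n_{k}}$ in $X^{\ast}$ — which, because $W$ is a $p$-$(V^{\ast})$ set, forces $\sup_{x\in W}|d_{k}(x)|\to0$, contradicting the lower bound $\varepsilon/2$ — or a $p$-Right null sequence $(x_{k})_{k}$ extracted from the witnesses in $W$ — which, because $H$ is a $p$-$(DPL)$ set, forces $\sup_{h\in H}|h(x_{k})|\to0$ and again contradicts the separation via $|d_{k}(x_{k})|\le 2\sup_{h\in H}|h(x_{k})|$.

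The main obstacle is exactly this extraction. A $p$-$(V^{\ast})$ set need not be relatively weakly compact (the unit ball of $\ell_{\infty}$ is such an example in the dual setting, Remark \ref{r1}), so one cannot merely extract weakly convergent subsequences; one must genuinely produce $\ell_{p}$-summability of the differences together with the Dunford--Pettis character of the test sequence. This is precisely where the hypothesis $2<p\le\infty$ should enter, and I expect to lean on the quantitative characterizations of $p$-$(DPL)$/$p$-Right and $p$-$(V^{\ast})$ sets from \cite{g9} and the companion paper \cite{ma} to drive the selection. Everything else is bookkeeping around the two reductions of the first paragraph.
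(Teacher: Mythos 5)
Your two preliminary reductions are sound and coincide with the opening moves of the paper's proof: the set $H=\bigcup_{T\in K}T^{\ast}(B_{Y^{\ast}})$ is indeed a $p$-$(DPL)$ set in $X^{\ast}$ (the paper verifies exactly the inequality $\vert\langle x_{n},T^{\ast}(y^{\ast})\rangle\vert\le\Vert T(x_{n})\Vert$ uniformly over $T\in K$, $y^{\ast}\in B_{Y^{\ast}}$), and your identification of $W=S(B_{G})$ as a $p$-$(V^{\ast})$ set is precisely the paper's appeal to Theorem 14 of \cite{g12}. The problem is that everything after that --- the part you yourself flag as ``the decisive step'' and ``the main obstacle'' --- is not a proof but a wish. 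From the failure of total boundedness you obtain doubly-indexed witnesses $x_{n,m}\in W$ with $\vert(h_{n}-h_{m})(x_{n,m})\vert\ge\varepsilon/2$, and you then assert that a Rosenthal/gliding-hump selection yields either (i) a subsequence of differences $d_{k}=h_{m_{k}}-h_{n_{k}}$ that is weakly $p$-summable in $X^{\ast}$, or (ii) a $p$-Right null sequence of witnesses. Neither horn is justified, and neither is plausible as stated: a bounded sequence in $X^{\ast}$ has no reason to admit a subsequence with weakly $p$-summable differences (for finite $p$ this is far stronger than weak nullity), and the Rosenthal character of $W$ only furnishes a weakly Cauchy subsequence of witnesses, whose differences are weakly null but need not be weakly $p$-summable, let alone form a Dunford--Pettis set as the definition of $p$-Right null requires. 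There is also the standard difficulty that the witness depends on the pair $(n,m)$, so a single diagonal extraction does not automatically preserve the separation. In short, the dichotomy you invoke is essentially equivalent to the proposition itself and is left unproved.

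The paper closes exactly this gap by a different mechanism: it adapts Proposition 3.5 of \cite{gg} to dominate the $p$-$(DPL)$ set $H$ by the image of a dual ball, $H\subset L^{\ast}(B_{Z^{\ast}})$, where $L:X\rightarrow Z$ is a single operator taking $p$-Right Cauchy sequences to norm convergent sequences. The claim then reduces to compactness of $S^{\ast}\circ L^{\ast}$, equivalently of $L\circ S$, which follows because $S(B_{G})$ is a $p$-$(V^{\ast})$ set and hence a Rosenthal set (Theorem 14 and Corollary 18 of \cite{g12}), so sequences in $S(B_{G})$ admit subsequences on which $L$ converges in norm. If you want to salvage your contradiction argument, you would first need to prove such a domination lemma (or an equivalent quantitative selection principle); without it the extraction step does not go through.
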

\begin{proof}
Take a $ p $-Right null sequence $ (x_{n})_{n} $ in $ X. $\ Then, since $K$ is a
$ p $-$ (DPL) $ set, it follows
$$ \vert \langle x_{n} , T^{\ast}(y^{\ast}) \rangle \vert\leq \vert \langle T(x_{n}) , y^{\ast} \rangle \vert\leq \parallel T(x_{n}) \parallel\rightarrow 0$$
uniformly for $ T\in K $ and $ y^{\ast}\in B_{Y^{\ast}}. $\ So,
$ \lbrace T^{\ast}(B_{Y^{\ast}}): T \in K \rbrace $
is a $ p $-$ (DPL) $ set.\ Adapting of {\rm (\cite[Proposition 3.5]{gg})},
there are a Banach space Z and an operator $ L, $ that takes $ p $-Right Cauchy sequences into norm convergent sequences, such that
$$ \lbrace T^{\ast}(B_{Y^{\ast}}): T \in K \rbrace \subset L^{\ast}(B_{Z^{\ast}}) .$$
Therefore, we have
$$\lbrace (S^{\ast}\circ T^{\ast})(B_{Y^{\ast}}): T \in K \rbrace =S^{\ast}(\lbrace T^{\ast}(B_{Y^{\ast}}): T \in K \rbrace)\subset S^{\ast}(L^{\ast}(B_{Z^{\ast}})).$$
Since $ S^{\ast} $ is $ p $-convergent, {\rm (\cite[Theorem 14]{g12})}, implies that $ S(B_{G}) $ is a $ p $-$ (V^{\ast}) $ set and so, it is a Rosenthal
set (see,{\rm (\cite[Corollary 18]{g12})}).\ Therefore, $ L\circ S $ is compact.\ Hence, $ (S^{\ast }\circ L^{\ast}) $ is compact and we are done.
\end{proof}
\begin{cor}\label{c1}
Suppose that $ 2<p \leq \infty$ and $ K \subset L(X, Y ) $ is a $ p $-$ (DPL) $ set.\ If $ X^{\ast} $ has the $ p $-Schur property,
 then the set
$ \lbrace  T^{\ast}(B_{Y^{\ast}}) : T\in K \rbrace$
is relatively compact in $ X^{\ast}. $

\end{cor}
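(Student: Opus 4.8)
The plan is to recognize this statement as an immediate specialization of Proposition \ref{p3}, obtained by taking $G = X$ and $S = id_X$. The crucial point is to match the hypothesis ``$X^{\ast}$ has the $p$-Schur property'' with the requirement in Proposition \ref{p3} that $S$ has $p$-convergent adjoint.

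First I would observe that, with $S = id_X \in L(X, X)$, the adjoint is $S^{\ast} = id_{X^{\ast}}$. By the definition recalled in Section 2, the operator $id_{X^{\ast}}$ is $p$-convergent precisely when every weakly $p$-summable sequence in $X^{\ast}$ is norm null, that is, exactly when $X^{\ast}$ has the $p$-Schur property. Thus the standing hypothesis of the corollary guarantees that $S = id_X$ satisfies the assumption of Proposition \ref{p3}.

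Next I would apply Proposition \ref{p3} to this choice of $S$. Its conclusion asserts that the set $\lbrace (S^{\ast} \circ T^{\ast})(B_{Y^{\ast}}) : T \in K \rbrace$ is relatively compact in $G^{\ast} = X^{\ast}$. Since $S^{\ast} = id_{X^{\ast}}$, the composition collapses to $S^{\ast} \circ T^{\ast} = T^{\ast}$ for every $T \in K$, so the family in question is exactly $\lbrace T^{\ast}(B_{Y^{\ast}}) : T \in K \rbrace$. This yields the desired relative compactness in $X^{\ast}$ and finishes the argument.

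I do not expect any genuine obstacle here: the corollary is a direct reading of the proposition under the substitution $S = id_X$, and the only step requiring a moment's attention is the identification of the $p$-Schur property of $X^{\ast}$ with the $p$-convergence of $id_{X^{\ast}}$, which is immediate from the definitions in Section 2.
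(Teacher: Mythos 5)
Your proposal is correct and is exactly the argument the paper intends: the corollary is stated without proof as an immediate consequence of Proposition \ref{p3}, obtained by taking $G=X$ and $S=id_{X}$, and your identification of the $p$-Schur property of $X^{\ast}$ with the $p$-convergence of $S^{\ast}=id_{X^{\ast}}$ is precisely the definition given in Section 2.
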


The space of all  differentiable
mappings $ f : U \rightarrow Y $ whose derivative $ f^{\prime} : U \rightarrow L(X, Y ) $ is uniformly continuous on $ U$-bounded subsets of $ U,$ represented by $ C^{1u}(U,Y) $ \cite{gg1}.
\begin{prop}\label{p4}
Let $ X $ be a Banach space and let $ U $ be an open convex subset of $ X. $\ If for every Banach space $ Y, $ every mapping $ f \in C^{1u}(U,Y) $ whose derivative $ f^{\prime} $
takes $ U $-bounded sets into $ p $-$ (DPL) $ sets, is $ p $-convergent, then $ X $ has the $ (DPP_{p}). $
\end{prop}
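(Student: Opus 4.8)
The plan is to prove the statement directly: I fix an arbitrary Banach space $Y$ and an arbitrary weakly compact operator $T\in W(X,Y)$ and show that $T$ is $p$-convergent. Since $Y$ and $T$ are then arbitrary, this is exactly the assertion $X\in(DPP_{p})$. The device is to feed the hypothesis the simplest possible map built from $T$, namely its restriction. First I would set $f:=T|_{U}:U\to Y$. Being the restriction of a bounded linear operator, $f$ is differentiable with $f^{\prime}(x)=T$ for every $x\in U$, so $f^{\prime}$ is a \emph{constant} map into $L(X,Y)$; it is trivially uniformly continuous on $U$-bounded sets, and hence $f\in C^{1u}(U,Y)$. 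Consequently $f^{\prime}$ sends \emph{every} $U$-bounded set to the single point $\{T\}\subset L(X,Y)$.

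The key step is to check that this singleton $\{T\}$ is a $p$-$(DPL)$ set, for then $f$ satisfies the derivative hypothesis of the Proposition. For this I would first show that every weakly compact operator is Dunford-Pettis $p$-convergent, independently of $X$. Indeed, let $(x_{n})_{n}$ be a $p$-Right null sequence, i.e.\ a weakly $p$-summable (hence weakly null) Dunford-Pettis sequence. Writing $\Vert T(x_{n})\Vert=\sup_{x^{\ast}\in T^{\ast}(B_{Y^{\ast}})}\vert x^{\ast}(x_{n})\vert$ and using that $T^{\ast}(B_{Y^{\ast}})$ is relatively weakly compact (because $T$, hence $T^{\ast}$, is weakly compact), a subsequence-extraction argument combined with the Dunford-Pettis property of $\{x_{n}:n\in\mathbb{N}\}$ forces $\Vert T(x_{n})\Vert\to 0$. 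Thus $T\in DPC_{p}(X,Y)$, so $\{T\}$ is a relatively compact subset of $DPC_{p}(X,Y)$, and Proposition \ref{p1}(i) gives that $\{T\}$ is a $p$-$(DPL)$ set. Therefore $f^{\prime}$ carries $U$-bounded sets into $p$-$(DPL)$ sets.

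Now I would invoke the hypothesis: $f$ must be $p$-convergent, that is, it maps weakly $p$-convergent $U$-bounded sequences to norm convergent sequences. To transfer this back to $T$, take an arbitrary weakly $p$-summable sequence $(u_{n})_{n}$ in $X$, fix $x_{0}\in U$, and choose $\varepsilon>0$ small enough that $v_{n}:=x_{0}+\varepsilon u_{n}$ is $U$-bounded (possible since $U$ is open and $(u_{n})$ is bounded). The sequence $(v_{n})$ is weakly $p$-convergent to $x_{0}\in U$, so $f(v_{n})=T(x_{0})+\varepsilon\,T(u_{n})$ converges in norm; as $T(v_{n})\to T(x_{0})$ weakly, the norm limit must be $T(x_{0})$, whence $\varepsilon\,T(u_{n})\to 0$ and so $\Vert T(u_{n})\Vert\to 0$. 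Thus $T$ is $p$-convergent, and since $T\in W(X,Y)$ was arbitrary, $X\in(DPP_{p})$.

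The main obstacle is the key step: verifying that a weakly compact operator is automatically Dunford-Pettis $p$-convergent, which rests on the interplay between the relative weak compactness of $T^{\ast}(B_{Y^{\ast}})$ and the Dunford-Pettis character of $p$-Right null sequences. A secondary technical point is the scaling-and-translation used to realize an arbitrary weakly $p$-summable sequence as a $U$-bounded weakly $p$-convergent sequence inside the open convex set $U$, so that the mapping-level conclusion can be read back as an operator-level one.
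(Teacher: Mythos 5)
Your proof is correct and follows essentially the same route as the paper: test the hypothesis on $f=T|_{U}$ for a weakly compact $T$, observe that $f^{\prime}\equiv T$ and that weak compactness forces $T\in DPC_{p}(X,Y)$ so the constant derivative sends $U$-bounded sets to $p$-$(DPL)$ sets, then read off the $p$-convergence of $T$ from the hypothesis. You are in fact somewhat more careful than the paper, which only tests weakly compact operators into $c_{0}$ (tacitly invoking the reduction of $(DPP_{p})$ to such operators) and omits the translation--scaling argument needed to pass from the mapping-level conclusion on $U$ back to the operator-level statement for $T$.
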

\begin{proof}
Let $ T:X\rightarrow c_{0} $ be a weakly compact operator.\ It is easy to verify that $ T$ is Dunford-Pettis $ p $-convergent.\
We proved that $ T $ is $ p $-convergent.\ Since
$$ T^{\prime}(x) =T, ~~~ \forall x\in X,$$
 for every $ U $-bounded set $ B $ and  for every $ p $-Right null sequence $ (x_{n})_{n}, $ it follows
$$ \lim_{n\rightarrow \infty}\sup_{x \in B}\parallel T^{\prime}(x) (x_{n}) \parallel=\lim_{n\rightarrow \infty}\parallel T(x_{n}) \parallel =0.$$
So, $ T^{\prime} $ takes $ U $-bounded sets into $ p $-$ (DPL) $ sets.\ Hence, by the
hypothesis, $ T $ is $ p $-convergent, that is $ X $ has the $ (DPP_{p}). $
\end{proof}
\begin{prop}\label{p5}
Suppose that $ X $ is a Banach space, and $ U$ is an open convex subset of $ X. $\ If for every Banach space $ Y, $ for every mapping $ f \in C^{1u} (U,Y),$ such that $ f^{\prime} $ takes
$ U $ bounded sets into $ p $-$ (DPL) $ sets, it follows that $ f^{\prime} $ is compact and takes its values
into $ K(X, Y ), $ then $ X^{\ast}$ has the Schur property.
\end{prop}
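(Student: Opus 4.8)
The plan is to proceed by contraposition, mirroring the proof of Proposition \ref{p4}: assuming $ X^{\ast} $ fails the Schur property, I will exhibit a Banach space $ Y $ and a mapping $ f \in C^{1u}(U,Y) $ whose derivative carries $ U $-bounded sets to $ p $-$ (DPL) $ sets, yet whose values do not all lie in $ K(X,Y) $, contradicting the hypothesis.\ The key device is the constant-derivative trick: for a bounded linear operator $ S : X \to Y $, the restriction to $ U $ of the map $ x \mapsto S(x) $ belongs to $ C^{1u}(U,Y) $ with $ f^{\prime}(x) = S $ for every $ x $, so $ f^{\prime} $ sends every $ U $-bounded set to the singleton $ \lbrace S \rbrace $.\ By Definition \ref{d1}, $ \lbrace S \rbrace $ is a $ p $-$ (DPL) $ set in $ L(X,Y) $ exactly when $ S \in DPC_{p}(X,Y) $, and for such a constant derivative the mapping $ f^{\prime} $ is automatically compact, so the only content of the conclusion is that $ S \in K(X,Y) $.\ Hence the hypothesis, tested on these $ f $, asserts that every Dunford-Pettis $ p $-convergent operator is compact, and it suffices to produce a single non-compact one.

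To build such an operator from the failure of the Schur property, I would start with a weakly null sequence $ (x_{n}^{\ast}) $ in $ X^{\ast} $ with $ \Vert x_{n}^{\ast} \Vert \geq \delta > 0 $ for all $ n $ (after passing to a subsequence); such a sequence is norm bounded.\ Define $ S : X \to c_{0} $ by $ S(x) = (x_{n}^{\ast}(x))_{n} $, which is well defined and bounded since $ x_{n}^{\ast}(x) \to 0 $ for each $ x \in X $ (the evaluation $ \hat{x} \in X^{\ast\ast} $ annihilates a weakly null sequence).\ Its adjoint satisfies $ S^{\ast}(e_{n}) = x_{n}^{\ast} $, so $ S^{\ast}(B_{\ell_{1}}) $ is contained in the closed absolutely convex hull of $ \lbrace x_{n}^{\ast} : n \in \mathbb{N} \rbrace $.

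The verification that $ S \in DPC_{p}(X, c_{0}) $ is where the three parts of Proposition \ref{p1} combine.\ The set $ \lbrace x_{n}^{\ast} : n \in \mathbb{N} \rbrace $ is relatively weakly compact, being a weakly convergent sequence, so part $ \rm{(iii)} $ makes it a $ p $-$ (DPL) $ set in $ X^{\ast} $; part $ \rm{(ii)} $ upgrades this to its closed absolutely convex hull, which contains $ S^{\ast}(B_{\ell_{1}}) $; and since subsets of $ p $-$ (DPL) $ sets are again $ p $-$ (DPL) $, part $ \rm{(iv)} $ yields $ S \in DPC_{p}(X, c_{0}) $.\ On the other hand $ S $ is not compact: otherwise $ S^{\ast} $ would be compact and $ \lbrace x_{n}^{\ast} \rbrace \subset S^{\ast}(B_{\ell_{1}}) $ relatively norm compact, forcing a norm-convergent subsequence of $ (x_{n}^{\ast}) $ whose limit must coincide with its weak limit $ 0 $, against $ \Vert x_{n}^{\ast} \Vert \geq \delta $.\ Taking $ Y = c_{0} $ and $ f(x) = S(x) $ then violates the hypothesis, which establishes the contrapositive.

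I expect no deep obstacle here; the argument is essentially a matter of assembling the right witness.\ The step deserving the most care is the chain $ \rm{(iii)} \Rightarrow \rm{(ii)} \Rightarrow \rm{(iv)} $ certifying $ S \in DPC_{p} $, together with the observation that a weakly null sequence furnishes a relatively weakly compact (hence $ p $-$ (DPL) $) set; the failure of compactness through the adjoint and the identification of $ \lbrace S \rbrace $ as a $ p $-$ (DPL) $ set are routine once the definitions are unwound.
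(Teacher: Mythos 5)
Your proof is correct, but it closes the argument differently from the paper. Both proofs rest on the same constant-derivative trick: for a bounded linear operator the restriction to $U$ lies in $C^{1u}(U,Y)$, its derivative is constant, and the hypothesis then forces that operator to be compact as soon as the singleton it produces is a $p$-$(DPL)$ set. The paper applies this to \emph{every} weakly compact operator $T:X\rightarrow Y$ for \emph{every} $Y$ (a weakly compact operator is Dunford--Pettis $p$-convergent, so $\lbrace T\rbrace$ is $p$-$(DPL)$), concludes that $W(X,Y)=K(X,Y)$ for all $Y$, and then invokes the Moshtaghioun--Zafarani characterization \cite[Theorem 2.1]{MZ} to get the Schur property of $X^{\ast}$. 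You instead argue by contraposition and build a single explicit witness: from a weakly null, non-norm-null sequence $(x_n^{\ast})$ in $X^{\ast}$ you form $S:X\rightarrow c_0$, certify $S\in DPC_p(X,c_0)$ through Proposition \ref{p1} (iii), (ii), (iv), and show $S$ is not compact via its adjoint. In effect you have inlined a proof of the relevant direction of the cited theorem, which makes your argument self-contained and more elementary, at the cost of being slightly longer; the paper's version is shorter but leans on an external result. All the steps you flag as delicate do go through: a weakly null sequence is relatively weakly compact, subsets of $p$-$(DPL)$ sets are $p$-$(DPL)$ directly from Definition \ref{d1}, and $\lbrace S\rbrace$ is $p$-$(DPL)$ precisely when $S\in DPC_p(X,Y)$ (Proposition \ref{p1} (i) and (vi)).
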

\begin{proof}
Let $ Y$ be an arbitrary Banach space, and let $ T:X\rightarrow Y $ be a weakly
compact operator.\ By the same reasoning as in the proof of Proposition \ref{p4}, $ T^{\prime} $ takes
$ U$-bounded sets into $ p $-$ (DPL) $ sets.\ Hence, $ T^{\prime} $ is compact and takes its
values in $ K(X, Y ). $\ It follows that
$ W(X, Y ) = K(X,Y ) $ for every Banach space $ Y .$\ So, $ X^{\ast}$ has the Schur property property {\rm (\cite[Theorem 2.1]{MZ})}.
\end{proof}

\begin{thm}\label{t2}
Suppose that $ U $ is an open convex subset of $ X$ and $ f:U\rightarrow Y $ is a mapping.\
Then the following assertions are equivalent:\\
$ \rm{(a)} $ $ f $ is differentiable, $ f^{\prime} $ takes $ U $-bounded sets into $ p$-$ (DPL) $ sets and $ f$ is $ p $-Right sequentially continuous.\\
$ \rm{(b)} $ There exist a Banach space $ Z,$ an operator $ S\in DPC_{p}(X, Z ) $ and a mapping
$ g:S(U) \rightarrow Y$ such that:\\
$ \rm{ (i)} $ $ f(x) = g(S(x)) $ for all $ x \in U.$\\
$ \rm{ (ii)} $ $ g \in D_{\mathcal{M}}(S(x), Y ) $ for every $ x \in U, $ where
\begin{center}
$ \mathcal{M}:=\lbrace S(B) :B $ is a $ U $-bounded subset of $ X \rbrace .$
\end{center}\
$ \rm{ (iii)} $ $ g^{\prime} $ is bounded on $ S(B) $ for every $ U $-bounded subset $ B\subset X.$\\
Moreover, if this factorization holds, $ f $ is $p $-Right sequentially continuous.
\end{thm}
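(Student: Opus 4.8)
The plan is to follow the template of the compact-operator factorization of Cilia and Guti\'errez \cite{ce4}, replacing "relatively compact set of operators" by "$p$-$(DPL)$ set" throughout, and to power the argument with two tools from this paper: the ideal-theoretic description of $DPC_p$ given in Proposition \ref{p1}(iv), and the realization of a $p$-$(DPL)$ subset of $X^{*}$ inside a dual ball that already appears in the proof of Proposition \ref{p3} (the adaptation of \cite[Proposition 3.5]{gg}).

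For the implication $\mathrm{(b)}\Rightarrow\mathrm{(a)}$, which I expect to be routine, I would differentiate the composition: from $f=g\circ S$ and the G\^ateaux/$\mathcal{M}$-differentiability of $g$, the chain rule gives $f'(x)=g'(S(x))\circ S$. To see that $f'$ sends each $U$-bounded $B$ to a $p$-$(DPL)$ set, take a $p$-Right null sequence $(x_n)$ in $X$; since $(x_n)$ is a Dunford-Pettis weakly $p$-summable sequence and $S\in DPC_p(X,Z)$, we have $\|S(x_n)\|\to 0$, whence
\[
\sup_{x\in B}\|f'(x)(x_n)\|\le\Big(\sup_{x\in B}\|g'(S(x))\|\Big)\,\|S(x_n)\|\longrightarrow 0,
\]
the supremum being finite by $\mathrm{(iii)}$. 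This is exactly the defining condition in Definition \ref{d1}. The $p$-Right sequential continuity of $f$ is then not an extra fact to be checked by hand: once $f'$ maps $U$-bounded (hence $U$-bounded Dunford-Pettis) sets into $p$-$(DPL)$ sets, Theorem \ref{t1} applies verbatim and yields the "Moreover" clause.

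The substantive direction is $\mathrm{(a)}\Rightarrow\mathrm{(b)}$. First I would fix an increasing sequence $(B_n)$ of convex $U$-bounded sets exhausting $U$ and, for each $n$, form $V_n=\{T^{*}(y^{*}):T\in f'(B_n),\ y^{*}\in B_{Y^{*}}\}\subset X^{*}$; the estimate $|\langle x_k,T^{*}y^{*}\rangle|\le\|T(x_k)\|$ used in Proposition \ref{p3} shows each $V_n$ is a $p$-$(DPL)$ set. After normalizing by the finite suprema $M_n=\sup_{T\in f'(B_n)}\|T\|$, the scaled union $\bigcup_n 2^{-n}M_n^{-1}V_n$ is again $p$-$(DPL)$ (separate the finitely many small indices, where each $V_n$ contributes through its own $p$-$(DPL)$ property, from the tail, which is controlled by $2^{-n}\sup_k\|x_k\|$), so by Proposition \ref{p1}(ii) its absolutely closed convex hull $W$ is a $p$-$(DPL)$ set in $X^{*}$. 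Applying to $W$ the adaptation of \cite[Proposition 3.5]{gg} invoked in Proposition \ref{p3} produces a Banach space $Z$ and an operator $S:X\to Z$ carrying $p$-Right Cauchy sequences to norm convergent sequences, with $W\subseteq S^{*}(B_{Z^{*}})$; since a $p$-Right null sequence is weakly null and $S$ maps it to a norm convergent, hence norm null, sequence, $S\in DPC_p(X,Z)$.

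Then I would define $g$ on $S(U)$ by $g(S(x))=f(x)$. Well-definedness is the crux: if $S(x_1)=S(x_2)$ then $x_1-x_2$ annihilates $S^{*}(Z^{*})\supseteq W$, so $\langle T(x_1-x_2),y^{*}\rangle=0$ for every $T\in f'(B_n)$ and $y^{*}\in B_{Y^{*}}$, i.e. $T(x_1-x_2)=0$ for every derivative $T$ occurring along the segment $I(x_1,x_2)$ (which lies in some $B_n$ by convexity); integrating $f'$ along this segment via the Mean Value Theorem forces $f(x_1)=f(x_2)$. The same kernel inclusion $\ker S\subseteq\ker f'(x)$ lets me factor each $f'(x)$ as $g'(S(x))\circ S$, the natural candidate for the derivative of $g$, and the containment $W\subseteq S^{*}(B_{Z^{*}})$ supplies the norm control giving $\mathrm{(iii)}$. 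I expect the main obstacle to be $\mathrm{(ii)}$: verifying that the difference quotients of $g$ converge, uniformly over the members $S(B)$ of $\mathcal{M}$, to $g'(S(x))$. This must be transported from the $\mathcal{M}$-differentiability of $f$ through $S$, and the delicate point is that $S$ need be neither injective nor of closed range, so one works on $S(U)$ with the quotient structure and checks that the factoring map $g'(S(x))$ is genuinely bounded on each $S(B)$ and serves as the $\mathcal{M}$-derivative, exactly as in the final part of \cite[Theorem 2.1]{ce4}.
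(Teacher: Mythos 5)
Your proposal is correct in substance, but the implication $\mathrm{(a)}\Rightarrow\mathrm{(b)}$ takes a genuinely different route from the paper's. The paper stays entirely on the predual side: it forms the scaled union $K=\bigcup_r f'(W_r)/(r\|f'\|_{W_r})$ over the exhaustion $W_r=\{x\in U: d(x,\partial U)>1/r\}\cap rB_X$, proves $K$ is $p$-$(DPL)$ by the same finite-part/tail split you describe, and then \emph{defines} the intermediate space directly as the completion $Z$ of the quotient $X/V_K$ equipped with the norm $\|S(x)\|=\sup_{\phi\in K}\|\phi(x)\|_Y$, following \cite[Theorem 2.1]{ce3}; the fact that $S\in DPC_p(X,Z)$ is then immediate from $K$ being $p$-$(DPL)$, and the norm estimate $\|f'(x)(y)\|\le r\|f'\|_{W_r}\|S(y)\|$ is built into the definition of the norm. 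You instead dualize --- forming the $p$-$(DPL)$ set $W\subset X^{*}$ of scaled adjoints and invoking the adaptation of \cite[Proposition 3.5]{gg} to obtain $S$ with $W\subseteq S^{*}(B_{Z^{*}})$ --- which buys you a Banach space $Z$ produced by an off-the-shelf interpolation already used in Proposition \ref{p3}, at the cost of importing that machinery where the paper's bare-hands quotient construction suffices; your norm control $\|f'(x)(y)\|\le 2^{n}M_n\|S(y)\|$ for $x\in B_n$ does follow correctly from the containment $W\subseteq S^{*}(B_{Z^{*}})$, and the well-definedness and Mean Value Theorem arguments coincide with the paper's. Two smaller remarks: the step you flag as the main obstacle, item $\mathrm{(ii)}$, is easier than you fear --- once $g$ is well defined one has the exact identity $g(S(x)+tS(y))-g(S(x))=f(x+ty)-f(x)$, so uniformity of the difference quotients over $S(B)$ is inherited verbatim from the Fr\'echet differentiability of $f$ over $B$, which is all the paper does. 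And for the ``moreover'' clause in $\mathrm{(b)}\Rightarrow\mathrm{(a)}$ your appeal to Theorem \ref{t1} is legitimate and shorter than the paper's argument, which instead proves directly that $g$ is uniformly Lipschitz on each $S(W_r)$ and composes that with $S\in DPC_p$; the paper's version has the side benefit of exhibiting the uniform continuity of the factor $g$.
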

\begin{proof}
(a) $ \Rightarrow (b) $ For every $ n\in \mathbb{N}, $ put
$$ W_{n}=\lbrace x \in U: d(x, \partial U)>\frac{1}{n} \rbrace \bigcap nB_{X} .$$
By the hypothesis for every $ r\in \mathbb{N}, $ $ f^{\prime}(\frac{W_{r}}{r \parallel f^{\prime}\parallel_{W_{r}}}) $ is a $ p$-$ (DPL) $ set.\ Now, we set $ K:=\displaystyle\bigcup _{r=1}^{\infty} \frac{f^{\prime}(W_{r})}{r\parallel f^{\prime}\parallel_{W_{r}}}.$\ We claim that, $ K $ is a $ p $-$ (DPL) $ set.\  Indeed, for every $ N\in \mathbb{N}, $ we define $ A_{N} := \displaystyle\bigcup_{r\leq N}\frac{f^{\prime}(W_{r})}{r \parallel  f^{\prime} \parallel}_{W_{r}}$ and $ B_{N} := \displaystyle\bigcup_{r> N}\frac{f^{\prime}(W_{r})}{r \parallel  f^{\prime} \parallel}_{W_{r}}.$\ Since $ K=A_{N}\cup B_{N} $ and $ A_{N} $ is a $ p $-$ (DPL) $ set, it is enough to show that
$ B_{N} $ is a $ p $-$ (DPL) $ set.\  For this purpose, let $ (x_{n})_{n} $ be a $ p $-Right null sequence in $ X $ and $ M= \displaystyle\sup _{n} \parallel x_{n}  \parallel.$\  If $ T \in B_{N} ,$ then there are  $ r>N $ and $ x\in W_{r} $ so that $ T=\frac{f^{\prime}(x)}{r\parallel f^{\prime}  \parallel_{W_{r}}} .$\ It is clear that, $ \parallel T\parallel\leq\frac{1}{r} <\frac{1}{N}.$\ Hence, for each $ N\in \mathbb{N} $ we have: 
$$ \lim_{n\rightarrow \infty}\sup_{T\in K}\parallel T(x_{n})   \parallel =max\lbrace  \lim_{n\rightarrow \infty}\sup_{T \in A_{N}} \parallel T(x_{n})\parallel ,\lim_{n\rightarrow \infty}\sup_{T \in B_{N}} \parallel T(x_{n})\parallel     \rbrace\leq \frac{M}{N} .$$
Therefore, $ \displaystyle \lim_{n\rightarrow \infty}\sup_{T\in K}\parallel T(x_{n})   \parallel =0 $ and so, $ K $ is a $ p $-$ (DPL) $ set.
Now,
as in the proof of {\rm (\cite[Theorem 2.1]{ce3})}, let
$$ V_{K}:=\lbrace x\in X: \displaystyle\sup _{\phi\in K}\parallel \phi(x) \parallel _{Y}=0 \rbrace $$
and $ G:=\frac{X}{V_{k}} .$\ If $ S:X\rightarrow G $ is the quotient map $ G, $ then $ G $ is a normed space
respect the norm $ \parallel S(x) \parallel =\displaystyle\sup _{\phi\in K}\parallel \phi(x) \parallel _{Y}, ~~~\forall x \in X.$\ Suppose that Z is the completion of $ G .$\ If $ (x_{n}) _{n}$ is a $ p $-Right null
sequence in X, then since $ K$ is a $ p $-(DPL) set, $ \parallel S(x_{n}) \parallel =\displaystyle\sup _{\phi\in K}\parallel \phi(x_{n}) \parallel\rightarrow 0. $\ Hence $ S\in DPC_{p} (X,Z).$\
Now we define $ g : S(U) \rightarrow Y $ by $ g(S(x))=f(x) ,~~~~ x\in U.$\ In the first, we proved that g is well defined.\ Suppose that $ \parallel S(x-y) \parallel=0. $\ Since the span of $ K$ contains $ f^{\prime}(U) ,$ we have
$$ \parallel f^{\prime}(c) (x-y) \parallel =0 ~~~ ~~~~~(c \in U).$$
By using the Mean Value Theorem {\rm (\cite[Theorem 6.4]{ch})},
$$\displaystyle\sup _{c\in I(x,y)}\parallel f(x)- f(y) \parallel\leq \parallel f^{\prime}(c) (x-y) \parallel =0,$$ and so $ f(x)=f(y) .$\ Therefore $g$ well defined.\ Now, we show that $ g $ is G$ \hat{a} $teaux differentiable.\ For given $ x,y \in U, $
\begin{equation}\label{eqeq}
\displaystyle\lim_{t\rightarrow 0}\frac{g(S(x)+tS(y))-g(S(x))}{t}=\lim_{t\rightarrow 0}\frac{f(x+ty)-f(x)}{t} =f^{\prime}(x)(y)
\end{equation}
where $\mid t \mid $ is sufficiently small so that $ x + ty \in U. $\ For $ x\in U $ fixed, the mapping
$ g^{\prime}(S(x)) :G\rightarrow Y$ given by $g^{\prime}(S(x)) (S(y)) =f^{\prime}(x)(y) ~~~~~(y \in X) $
is linear.\ Choosing $ r \in \mathbb{N} $ so that $ x \in W_{r}, $ we have
\begin{eqnarray*}
 \parallel g^{\prime}(S(x)) (S(y)) \parallel &=&\\ \parallel f^{\prime}(x)(y) \parallel
&\leq & r\parallel f^{\prime}\parallel_{W_{r}} \parallel S(y) \parallel.
\end{eqnarray*}
Hence $g^{\prime} (S(x)) $ is continuous and may be extended to the completion $ Z$ of $ G. $\ Hence $ g$ is
G$ \hat{a} $teaux differentiable.\ Moreover, since $ f $ is differentiable, for every $ U $-bounded set
$ B, $ the limit in (\ref{eqeq}) exists uniformly with respect to $ S(y) $ in $ S(B). $\ Therefore
$ g \in D_{\mathcal{M}}(S(x), Y ) $ for every $ x \in U, $ where
$$ \mathcal{M}=\lbrace S(B) : B ~  is  ~ a~ U-bounded ~ subset ~ of ~X \rbrace $$
and (ii) is proved.\\
From the inequality $\rm{ (ii)}, $ we have
\begin{center}
$ \parallel g^{\prime}(S(x))\parallel =\displaystyle\sup_{\parallel S(y)\parallel \leq1} \parallel g^{\prime} (S(x)) (S(y)) \parallel \leq r\parallel f^{\prime}\parallel_{W_{r}}, ~~(x \in W_{r})$
\end{center}
and this implies (iii).\\
(b) $ \Rightarrow $ (a). Suppose that there are a Banach space $ Z$ and a Dunford-Pettis $ p $-convergent $ S: X\rightarrow Z, $
and a mapping $ g : S(U) \rightarrow Y $ satisfying $ (b). $\ It is clear that, $ f$ is differentiable.\ We claim that $ f^{\prime} $ takes $ U $-bounded sets into $ p $-$(DPL) $ sets.\ For this purpose, suppose that  $ B $ is a
$ U$-bounded set  and  $ (x_{n})_{n}$ is a 
 $ p $-Right null sequence  in $ X.$\ Since $ S \in DPC_{p}(X,Z), $ we have
\begin{eqnarray*}
 \displaystyle\sup _{x\in B} \parallel f^{\prime}(x)(x_{n})\parallel = \displaystyle \sup_{x\in B}\parallel g^{\prime}(S(x))(S(x_{n}))\parallel
 &\leq &\\ \displaystyle \sup_{x\in B}\parallel g^{\prime}(S(x))\parallel \parallel S(x_{n})\parallel\rightarrow 0.
\end{eqnarray*}
 Therefore $ f^{\prime}(B) $ is a $ p $-$(DPL) $ set.\
Given $ r\in \mathbb{N},$ the mapping $ g $ is uniformly continuous on $ S(W_{r}) .$ Indeed, for $ x, y\in W_{r} ,$
we have
\begin{flushleft}
$ \parallel g(S(x))-g(S(y)) \parallel=\parallel f(x) -f(y) \parallel \leq \displaystyle \sup_{c\in W_{r}}\parallel f^{\prime}(c)(x-y) \parallel $
\end{flushleft}
\begin{center}
$ \leq r\parallel f^{\prime}\parallel_{W_{r}} \displaystyle \sup_{\phi\in K }\parallel \phi(x-y)\parallel=r\parallel f^{\prime}\parallel_{W_{r}} \parallel S(x-y)\parallel $
\end{center}
where we have used that $ W_{r} $ is a convex set.\ Hence, if $ (x_{n})_{n} $ is a $ U $-bounded and
$ p $-Right Cauchy sequence, then the sequence $ (S(x_{n}))$ in $S(W_{r}), $ for a suitable index
$ r, $ is norm Cauchy and therefore, $ (f(x_{n})) = (g(S(x_{n}))) $ is also a norm
Cauchy sequence.\ Hence, $ f$ is $ p $-Right sequentially continuous.
\end{proof}


\begin{thebibliography}{99}
\bibitem{AlbKal} F.\ Albiac and N.\ J.\ Kalton,{ Topics in Banach Space Theory,} Graduate
Texts in Mathematics, 233, Springer, New York, 2006.\
\bibitem{ma} {M.\ Alikhani} Sequentially Right-Like Properties on Banach Spaces, Filomat 33:14 (2019), 4461-4474.\
\bibitem{An} {K.\ T.\ Andrews,} Dunford-Pettis sets in the space of Bochner integrable functions, Math.\ Ann.\ \textbf{241} (1979), 35-41.\


\bibitem{C1} { J.\ M.\ F.\ Castillo,}
$ p $-converging and weakly $ p $-compact operators in $ L_{p} $-spaces.
Actas II Congreso de Analisis Funcional, 1990, Extracta Math.\ volum dedicated
to the II Congress in Functinal Analysis held in Jarandilla de la Vera, Caceres,
June 46-54.
\bibitem{cs} J.\ M.\ F.\ Castillo and F.\ S$\acute{a} $nchez,  Dunford-Pettis-like properties of continuous function vector spaces, Rev.\ Mat.\ Univ.\ Complut.\ Madrid \textbf{6} (1993), 43-59.\
\bibitem{ch}
S.\ B.\ Chae, Holomorphy and Calculus in Normed Spaces, Monogr.\ Textbooks Pure Appl.\ Math.\ 92, Dekker, New York 1985.

\bibitem{ccl} D.\ Chen, J.\ Alejandro Ch$ \acute{a} $vez-Dom$ \acute{i}$nguez and L.\ Li, $ p $-converging operators and Dunford-Pettis
property of order $ p, $ J.\ Math.\ Anal.\ Appl.\ \textbf{461} (2018), 1053-1066.
\bibitem{ce2}
R.\ Cilia and J.\ M.\  Guti\'errez,  Weakly sequentially continuous differentiable mappings, J.\ Math.\ Anal.\ Appl.\ \textbf{360} (2009), 609-623.
\bibitem{ce3}
R.\ Cilia and J.\ M.\  Guti\'errez,  Factorization of weakly continuous differentiable mappings, Bull.\ Braz.\ Math.\ Soc.\ (N.S.) \textbf{40} (2009), 371-380.
\bibitem{ce4}
R.\ Cilia, J.\ M.\ Guti\'errez and G.\ Saluzzo,  Compact factorization of differentiable mappings, Proc.\ Amer.\ Math.\ Soc.\ \textbf{137} (2009), 1743-1752.

\bibitem{ce1} { R.\ Cilia and G.\ Emmanuele,} some isomorphic properties in $ K(X,Y) $ and in projective tensor products,\ Colloq.\ Math.\ \textbf{146} (2017), 239-252.

\bibitem{di1} { J.\ Diestel,}  Sequences and Series in Banach Spaces, Graduate Texts in Mathematics,
Vol. 922, Springer-Verlag, New York, 1984.
\bibitem{djt} { J.\ Diestel, H.\ Jarchow and A.\ Tonge,} Absolutely Summing Operators, Cambridge Univ.\ Press, (1995).\

\bibitem{g8} {I.\ Ghenciu,} A note on some isomorphic properties in projective tensor products, Extracta Math.\ \textbf{32} (2017), 1-24.\
\bibitem{g12} I.\ Ghenciu, The $ p $-Gelfand-Phillips property in spaces of operators and Dunford-Pettis like sets, Acta Math.\ Hungar. \textbf{155} (2018), 439-457.
\bibitem{g9} {I.\ Ghenciu,} Some classes of Banach spaces and complemented subspaces of operators, Adv.\ Oper.\ Theory. \textbf{4} (2019), 369-387.
\bibitem{gg}
M.\ Gonzalez and J.\ M.\ Guti\'errez, The compact weak topology on a Banach space, Proc.\ Roy.\ Soc.\ Edinburgh Sect. A \textbf{120} (1992), 367-379.

\bibitem{gg1} M.\ Gonzalez and J.\ M.\ Guti\'errez,  Factorization of weakly continuous holomorphic mappings,
Studia Math. \textbf{118} (1996), 117-133.

\bibitem{ccl1} {L.\ Li, D.\ Chen and J.\ Alejandro Ch$ \acute{a} $vez-Dom$ \acute{i} $nguez,} Pelczy$ \acute{n} $ski's property $ (V^{\ast}) $ of order $ p$ and its quantification, Math.\ Nachr.\ \textbf{291} (2018) 420-442.

\bibitem{MZ} {S.\ M.\ Moshtaghioun and J.\ Zafarani,} Weak sequential convergence in the dual of operator ideals, J.\ Operator Theory, \textbf{49} (2003), 143-151.

\bibitem {wc} Y.\ Wen and J.\ Chen, Characterizations of Banach spaces with relatively compact Dunford-Pettis sets, Adv. in Math. (China) \textbf{45}(2016), 122-132.

\end{thebibliography}
\end{document}